\theoremstyle{plain}
\newtheorem{theo}{Theorem}
\newtheorem{prop}[theo]{Proposition}
\newtheorem{definition}[theo]{Definition}
\theoremstyle{remark}
\theoremstyle{remark}
\newtheorem{remX}{Remark} 
\newenvironment{rem}    
	{%
	\pushQED{\qed}\begin{remX}}
	{\popQED\end{remX}}
\newtheorem{exX}{Example}
	{%
	\pushQED{\qed}\begin{exX}}
	{\popQED\end{exX}}
\title{Resource constrained shortest path algorithm for EDF short-term thermal production planning problem}
\author{Markus Kruber, Axel Parmentier, Pascal Benchimol}
\begin{document}
\maketitle

\begin{abstract}
Unit commitment problem on an electricity network consists in choosing the
production plan of the plants (units) of a
company 
in order to meet demand constraints. It
is generally solved using a decomposition approach where demand
constraints are
relaxed, resulting in one pricing subproblem for each unit.
In this paper we focus on the pricing subproblem for thermal units at EDF, a major French
electricity producer.
Our objective
is to determine an optimal two-day production plan that minimizes the overall cost while
respecting several non-linear operational constraints.
The pricing problem is generally solved by dynamic programming.
\AP{However, due to the curse of dimensionality, dynamic programming reaches its limits when extra-constraints have to be enforced.}
We model the subproblem as a resource constrained shortest path (RCSP) problem.
Leveraging on RCSP
algorithms recently introduced by the second author,
 we obtain an order of magnitude speed-up \AP{with respect to traditional RCSP algorithms.}


\end{abstract}



\section{Introduction}


\subsection{Context}

A key application of Operation Research to the energy industry is the design of
production plans for electricity producers. A production plan of a unit is
the sequence of production levels at which it operates. Technical constraints
restrict the changes of power levels that can be operated: for instance, a nuclear
unit cannot be launched or stopped instantaneously. The revenue and the costs
generated by a unit depend on its production plan. Given electricity prices, the
\emph{single unit commitment} problem aims at finding a profit maximizing production
plan for a set of units.

As mentioned by \citet{tahanan2014large} in their recent survey on the \uc,
one of the state of the art methods to solve \AP{the multi} unit commitment problem is
to perform a Lagrangian relaxation of the demand constraints, resulting in one
single unit commitment subproblem for each unit. This is for instance the
approach in use at EDF \citep{hechme2010short}.
For producers with a large number of power units, hundreds of single unit commitment
subproblems must be solved at each iteration of the Lagrangian relaxation.
It is
therefore practically crucial to be able to solve single unit commitment
problems very efficiently.
Single unit commitment problems differ between units of different
technologies---hydro, nuclear, thermal, etc.---but are pretty similar between units of the
same technology. The present paper is the {outcome}
of a partnership with EDF and
focuses on thermal power units.
This industrial context introduces two optimization challenges:
first, the solution must respect several kind of industrial constraints (including ramp-up/ramp-down constraints, minimum online time, minimum offline time, maximal number of startups and maximal number of shutdowns),
and second, the solution scheme computing time must not exceed a few milliseconds.
The goal of the partnership was to design an algorithm that could solve the problem with all the technical constraints in at most 5 milliseconds.

{Dynamic programming is the usual method to solve thermal single unit
commitment problems.}
{Due to the curse of dimensionality}, the usual dynamic programming approach,
suffers from several shortcomings,
most notably high computing times when all operational constraints
are taken into account.
In our setting, using dynamic programming requires to drop some technical constraints, as it would otherwise lead to a state space of dimension 7, which is not tractable in practice.
We therefore introduce a new approach based on resource constrained shortest
paths (RCSP). Standard RCSP algorithms enable to solve the problem with all
technical constraints in a few dozens of milliseconds, which is too slow given
our time limit.
By introducing a new RCSP algorithm, we obtain an order of magnitude speed-up and
are able to solve the problem with all its technical constraints in the desired
time limit.

\subsection{Literature review}  
\label{sub:literature_review}


The single unit commitment problem for thermal units is usually solved using dynamic
programming
\cite{frangioni2006solving,travers1998dynamic,fan2002new,bannister1991rapid}.
The dynamic programming algorithm can be used to solve the whole \suc or only
to take commitment decisions, production levels are chosen later using linear
programming~\cite{frangioni2006solving}. Mixed Integer Linear Programming
approaches have also been proposed as an alternative to dynamic
programming~\cite{morales2013tightStart,morales2013tightBis,ostrowski2012tight,rajan2005minimum,arroyo2000}.
Nonetheless, real-life operational constraints make this problem difficult to
model as a mixed integer linear program, and integrality constraints reduce the
efficiency of this approach.


\citet{irnich2005shortest} survey the literature on resource constraint
shortest path problems. Enumeration algorithms are the most efficient state of
the art approach to these problems. These algorithms rely on dominance
relations between paths resources to discard partial solutions in an
enumeration of all the paths.
It is well-known that the utilization of bounds on paths resources, used to discard more
paths using a branch and bound paradigm, drastically speed-up enumeration
algorithms \citep{dumitrescu2003improved}.
Enumeration algorithms mainly differ by the order in which partial paths are
processed. Label setting algorithms extend all the paths ending in one vertex
simultaneously, while label correcting algorithms select the ``most promising''
partial path to extend. Label correcting algorithms are more efficient if there
is a good criterion to evaluate the ``most promising'' path, while label
setting algorithms is more efficient otherwise because it never expand paths
that are later dominated. When available, lower bounds on paths resources
provide a natural way to identify the most promising path and make label
correcting algorithms very efficient.

To the best of our knowledge, resource constrained shortest path problems have
not been used to solve the \suc. We believe that this is because, as we show in
Section~\ref{sub:numericcal_results_single} the key element in the performance of our
constrained shortest path approach is the use of bounds to discard partial
paths and to select the ``most promising'' path. Indeed, until recently,
there was no \AP{generic} method
to compute such bounds for non-linear constraints, which
prevented their utilization for the \suc.
We are able to compute such bounds by using the algorithms recently introduced by the
second author \citep{parmentier2014stochastic}.






\subsection{Contributions and plan} 
\label{sub:contributions_and_plan}

Our main contribution is to introduce a new solution method for the single unit
commitment problem which leads to an order of magnitude speed-up with respect
to existing approaches.
This new algorithm is able to solve industrial instances with all their technical constraints within the time limit of 5 milliseconds.
To that purpose,
we have extended the RCSP framework recently introduced by the second author
\citep{parmentier2015algorithms} to be able to deal with constraints on
subpaths and have modeled the single unit commitment problem in this extended
framework. We have then leveraged on the framework to design efficient solution
algorithm.
Section~\ref{sec:single_unit_commitment_problem_statement} states the single
unit commitment problem, and Section~\ref{sec:ordered_monoid_framework} briefly
introduces the lattice ordered monoid framework.
Section~\ref{sec:solution_approach_to_single_unit_commitment} explains how to
model and solve the single unit commitment within the RSCP framework and
Section~\ref{sub:numericcal_results_single} provides numerical results showing
the efficiency of the approach.
\section{Single unit commitment problem statement}
\label{sec:single_unit_commitment_problem_statement}


\subsection{Basic model}

Our goal is to find a production plan of a thermal unit which maximizes the profit obtained
by selling electricity while respecting several technical constraints.

We consider a finite time horizon $[T] = \{1, 2, \dots, T \}$ 
with $T \in \mathbb{N}_{\ge 1}$.
The unit can produce at power \emph{levels} $s$ to be chosen in a finite set $S$.
%
The unit can switch between power levels by choosing a \emph{transition}
$\alpha = (s^{\mathrm{init}}_{\alpha}, s^{\mathrm{final}}_{\alpha},\tau_{\alpha} )$ among
the finite set $\calA \subset S \times S \times [T]$ of allowed transitions.
A transition $\alpha \in \calA$ is defined by an \emph{initial level} $s^{\mathrm{init}}_{\alpha} \in S$,
a \emph{final level} $s^{\mathrm{final}}_{\alpha} \in S$, and a \emph{duration} $\tau_{\alpha} \in [T] $.

At the beginning of the time horizon $t^{\mathrm{b}} = t_1 = 1$, the unit is at
power level $s^{\mathrm{b}} = s_1 \in S$. It then must chose an
allowed transition $\alpha_1 \in \calA$ such that $s^{\mathrm{init}}_{\alpha_1}
= s_1$. The unit will follow this transition and attain the power level
$s^{\mathrm{final}}_{\alpha_1} = s_2$ at time $t_2 = t_1 + \tau_{\alpha_1}$.
After this it will chose another allowed transition with $s_2$ as initial
level, reach power level $s_3$ at time $t_3$, and so on until the end of the
time horizon is reached.

\begin{definition}\label{def:prod_plan}
A \emph{production plan} $p$ is a sequence of the form
\begin{equation*}
 (s_1,t_1, \alpha_1), (s_2, t_2, \alpha_2), \ldots, (s_k, t_k, \alpha_k)
\end{equation*}
where $k \in \N_{\ge 1}$, $(s_i,t_i, \alpha_i) \in S \times [T] \times \calA$ for all $i \in
\{1,\ldots,k\}$ and the six following conditions are satisfied
\begin{alignat*} {3}
  &s^{\mathrm{init}}_{\alpha_i} = s_i &\quad& \forall i \in \{ 1, \dots, k\} ,
 &\quad\quad\quad\quad\quad\quad\quad\quad\quad\quad
 &s_1 = s^{\mathrm{b}}, 
 \\
 &s^{\mathrm{final}}_{\alpha_i} = s_{i+1} && \forall i \in \{1, \dots, k-1\},
 && 
 t_1 = 1, 
 \\
 & t_i + \tau_{\alpha_i} = t_{i+1} && \forall i \in \{ 1, \dots, k-1\},
 && t_k + \tau_{\alpha_k} = T.
\end{alignat*}


\end{definition}
Note that the set of allowed transitions is restricted to model several technical
constraints of the thermal unit.
For example, starting up a thermal unit requires several time steps to warm up the physical components. Consequently, the  allowed transitions that have the offline level as their initial level have a duration of several time steps.
Another example comes from the so-called ramp-up and ramp-down constraints
\citep{rajan2005minimum}, which prevent the unit from increasing or decreasing
its power output too quickly.
Consequently, the allowed transitions with a large difference in power output
between the initial and final power level require several time steps.
Furthermore, the duration of a transition can be used to model the minimum time
that need to be spent at the same power level after a change. 
%
The set of transitions contains not only the switches from one power level to another, but also the transitions that allows to stay on the same power level. More precisely, all transitions of the form $(s, s, 1)$ for $s \in S$ are allowed (and thus contained in $\calA$).

\subsection{Minimum up/down times}

The set of power levels $S$ is partitioned into disjoint subsets called
\emph{layers}. Moreover, the set of layers is partitioned into disjoint subsets
 called \emph{modes}.
A mode should be understood as ``the unit is online'' or ``the unit is
offline''.
The rationale behind the layers is as follows: all power levels
in a layer correspond to the same global power output for the unit, but the global
power output can be distributed between active power and system services.
Consequently, a change of power level within a layer does not have the same technical
impact as a change between two distinct layers.

We denote by $L$ the set of layers and by $\mathsf{layer}(s) \in L$ the layer associated
to level $s \in S$.
Similarly, $M$ is the set of modes and $\mathsf{mode}(s) \in M$ is the mode associated
to $\mathsf{layer}(s)$.
When reaching a new layer, the unit has to stay within this layer
for at least the duration $\tau_{\mathrm{lay}} \in \N$. Similarly, when reaching a new
mode,
the unit has to stay within this mode for at least $\tau_{\mathrm{mod}} \in \N$ time steps.
More formally, a production plan 
must satisfy the following constraints.

\begin{enumerate}[label=(\Alph*)]
\item \label{rule:min_layer_duration} For any $i \in \{1, \dots, k-1\}$ such that
$\alpha_i$ is a change of layer
(i.e.,
 $\mathsf{layer}(s^{\mathrm{init}}_{\alpha_i}) \neq \mathsf{layer}(s^{\mathrm{final}}_{\alpha_i})$
 ),
 let $\alpha_j$ be the next change of layer (i.e., the smallest $j \in \{i+1,
   \ldots, k \}$ such that $\mathsf{layer}(s^{\mathrm{final}}_{\alpha_j}) \neq
 \mathsf{layer}(s^{\mathrm{final}}_{\alpha_i})$). Then, we must have $t_j  \geq t_{i+1} + \tau_{\mathrm{lay}}$

 \item \label{rule:min_mode_duration} For any $i \in \{1, \dots, k-1\}$ such that
 $\alpha_i$ is a change of mode
 (i.e.,
  $\mathsf{mode}(s^{\mathrm{init}}_{\alpha_i}) \neq \mathsf{mode}(s^{\mathrm{final}}_{\alpha_i})$
  ),

  let $\alpha_j$ be the next change of mode
  (i.e., the smallest $j \in \{i+1, \dots, k \}$ such that
  $\mathsf{mode}(s^{\mathrm{final}}_{\alpha_j} \neq
    \mathsf{mode}(s^{\mathrm{final}}_{\alpha_i})$.
  Then, we must have $t_j  \geq t_{i+1} + \tau_{\mathrm{mod}}$
\end{enumerate}

The minimum mode duration $\tau_{\mathrm{mod}}$ allows to model the minimum up/down times found in the
literature \citep{rajan2005minimum}. 

\subsection{Maximum number of layer/mode changes}
Transitions from one layer to another, or from one mode to another (e.g. a startup),
or from a "high" power level to a "low" one (so-called \emph{deep} transitions) create strain
on the physical components of a plant.
 To limit this strain, we limit the
number of such changes that can happen within the time horizon.
We consider three constraints of this kind:
  the maximum number of startups,
  the maximum number of layer changes,
  and the maximum number of deep transitions.
More formally, for each transition
$\alpha \in A$, we are given
an associated indicator function for startups
$\ind_{\mathsf{startup}}(\alpha)$, layer changes
$\ind_{\mathsf{layer}}(\alpha)$, and deep transitions
$\ind_{\mathsf{deep}}(\alpha)$.
We also have as input
a maximum number of startups $n^{\mathrm{max}}_{\mathsf{startup}} \in \N$,
a maximum number of layer changes $n^{\mathrm{max}}_{\mathsf{layer}} \in \N$,
and a maximum number of deep transitions $n^{\mathrm{max}}_{\mathsf{deep}} \in \N$.
A production plan must satisfy the following constraints.
\begin{enumerate}[label=(\Alph*)]
\setcounter{enumi}{2}
\item \label{rule:max_nb_of_startups} $\displaystyle\sum_{i = 1}^k \ind_{\mathsf{startup}}(\alpha_i) \leq n^{\mathrm{max}}_{\mathsf{startup}}$

\item \label{rule:max_nb_of_change_of_layer} $\displaystyle\sum_{i = 1}^k \ind_{\mathsf{layer}}(\alpha_i) \leq n^{\mathrm{max}}_{\mathsf{layer}}$

\item \label{rule:max_nb_of_deep_transitions} $\displaystyle\sum_{i = 1}^k \ind_{\mathsf{deep}}(\alpha_i) \leq n^{\mathrm{max}}_{\mathsf{deep}}$

\end{enumerate}
\noindent
A production plan is \emph{feasible} if it satisfies constraints
\ref{rule:min_layer_duration}--\ref{rule:max_nb_of_deep_transitions}.


\subsection{Costs and profits}
Running a unit induces several kind of production costs, including: startup costs,
fixed cost for being online, and a cost proportional to the power output.
At the same time, the produced energy generates profit, which depends
not only on the amount of produced energy, but also on the time steps at which the
energy is produced and sold.
Taking into account all these costs and gains result in a global cost function
$\lambda: [T] \times \calA \rightarrow \R$, where $\lambda(t, \alpha)$ is the cost induced
by choosing transition $\alpha \in \calA$ at time step $t \in [T]$.
More formally, the overall cost of a production plan 
is given by

\begin{equation}\label{eq:cost_def}
  \displaystyle\sum_{i = 1}^k \lambda(t_i, \alpha_i).
\end{equation}

\subsection{Single unit commitment problem}
\label{sub:single_unit_commitment_problem}

The \emph{single unit commitment problem} consists in finding a feasible production plan 
of minimum cost \eqref{eq:cost_def}.
 %
 %
 %
 %
 %


\section{Ordered Monoid Framework} 
\label{sec:ordered_monoid_framework}

In this section, we follow \citep{parmentier2017aircraft} to briefly introduce the resource constrained shortest path framework of \cite{parmentier2015algorithms}.
{A \emph{digraph} $D$ is a pair $(V,A)$, where $V$ is the set of \emph{vertices} and $A$ is the set of \emph{arcs} of $D$. An \emph{arc} $a$ links a \emph{tail} vertex to a \emph{head} vertex.
A \emph{path} is a sequence of arcs $a_{1},\ldots,a_{k}$ such that for each $i \in \{1,\ldots,k-1\}$, the head vertex of $a_{i}$ is the  tail vertex of $a_{i+1}$.
The \emph{origin} of a path it the tail of its first arc and its
\emph{destination} is the head of its last arc. Given two vertices $o, d \in
V$, an $o$-$d$ path $P$ is a path with origin $o$ and destination $d$.}

A binary operation $\rplus$ on a set $\rset$ is \emph{associative} if $\re
\rplus (\re' \rplus \re'') = (\re \rplus \re')\rplus \re''$ for $\re,\re',\re''
\in \rset$. An element $0$ is \emph{neutral} if $0 \rplus \re =  \re
\rplus 0 = \re$ for any $\re \in \rset$. A set $(\rset,\rplus)$ is a
\emph{monoid} if $\rplus$ is associative and admits a neutral element. A partial
order $\rleq$ is \emph{compatible} with $\rplus$ if the mappings $\re \mapsto
\re \rplus \re'$ and $\re \mapsto \re' \rplus \re$ are non-decreasing for all
$\re' $ in $\rset$. A partially ordered set $(\rset,\rleq)$ is a \emph{lattice}
if any pair $(q,q')$ of elements of $\rset$ admits a greatest lower bound or
\emph{meet} denoted $\re\meet\re'$, and a least upper bound or \emph{join}
denoted $\re\join\re'$. A set $(\rset,\rplus,\rleq)$ is a \emph{lattice ordered
monoid} if $(\rset,\rplus)$ is a monoid, $(\rset,\rleq)$ is a lattice, and
$\rleq$ is compatible with $\rplus$.

Given a lattice ordered monoid $(\rset,\rplus,\rleq)$, a digraph $D=(V,A)$, an
origin vertex $o$, a destination vertex $d$, a set of arc resources $\re_a \in \rset$ for all arcs $a \in A$ and two non-decreasing mappings
$\rcost : \rset \rightarrow \R$ and $\rmeas:\rset \rightarrow \{0,1\}$, the
\MRCSP seeks
$$\text{an $o$-$d$ path $P$ of minimum
$\rcost\left(\bigrplus_{a\in P}\re_{a}\right)$ among those satisfying $\rmeas\left(\bigrplus_{a\in
P}\re_{a}\right) = 0$,} $$ 
where the sum $\bigrplus_{a\in P}$ is taken along path $P$ -- operator $\rplus$ is not necessarily commutative.
The sum $\bigrplus_{a\in P}\re_{a}$ is the
\emph{resource} of a path $P$, and we denote it by $\re_{P}$. The real number
$\rcost\left(\re_P\right)$ is its \emph{cost}, and the path $P$ is
\emph{feasible} if $\rmeas\left(\bigrplus_{a\in P}\re_{a}\right)$ is equal to
$0$. We therefore call $\rcost$ and $\rmeas$ the cost and the infeasibility
functions.

We now describe an \emph{enumeration algorithm} for the \MRCSP. It follows the
standard labeling scheme \cite{irnich2005shortest} for resource constrained
shortest paths. The lattice ordered monoid structure enables to extend these
algorithms to new problems and to speed them up due to new tests and keys. A
list $\mathsf{L}$ of partial paths $P$ and an upper bound $c_{od}^{\mathrm{UB}}$ on the
cost of an optimal solution are maintained. Initially, $\mathsf{L}$ contains the
empty path at the origin $o$, and $c_{od}^{\mathrm{UB}} = +\infty$. The
\textsf{key} in Step \ref{step:key} and the \textsf{test(s)} in Step
\ref{step:test} must be specified to obtain a practical algorithm and will be
defined below. We now
introduce two \textsf{tests}.
 While $\mathsf{L}$ is
not empty, the following operations are repeated.

\begin{enumerate}[label=(\roman*)]
	\item Extract a path $P$ of minimum \textsf{key} from $\mathsf{L}$. Let $v$ be the last vertex of $P$. \label{step:key}
	\item If $v = d$, then: if $\rmeas(\re_{P}) = 0$ and $\rcost(\re_{P}) <
    c_{od}^{\mathrm{UB}}$, update $c_{od}^{\mathrm{UB}}$ to $\rcost(\re_{P})$. \label{step:destination}
	\item Else if \textsf{test(s)} return(s) \textsf{yes}, extend $P$: for each arc $a$ outgoing from $v$, add $P+a$ to $\mathsf{L}$. \label{step:test}
\end{enumerate}
A path $P$ \emph{dominates} a path $Q$ if $\re_{P}\rleq \re_{Q}$. The
\emph{dominance \textsf{test}} uses a list $\mathsf{L}_{v}^{\mathrm{nd}}$ of
non-dominated $o$-$v$ paths for each vertex $v$. It can be expressed as follows.

\smallskip (Dom) \emph{If there is no path in $\mathsf{L}_{v}^{\mathrm{nd}}$ that dominates $P$, return} \textsf{yes}. \emph{Otherwise return} \textsf{no}.
\smallskip




\noindent If the answer is \textsf{yes}, then before extending $P$, we remove all the
paths dominated by $P$ from $\mathsf{L}_{v}^{\mathrm{nd}}$ and add $P$ to
$\mathsf{L}_{v}^{\mathrm{nd}}$.
 We now suppose to have lower bounds $b_v$ on
the resource $\re_Q$ of all the $v$-$d$ paths $Q$ for each $v$ in $V$, and introduce the following \emph{lower bound \textsf{test}}.

\smallskip (Low) \emph{If $P$ satisfies
$\rmeas(\re_{P}\rplus b_v) =0$ and $\rcost(\re_{P}\rplus b) \leq
c_{od}^{\mathrm{UB}}$ return} \textsf{yes}. \emph{Otherwise return} \textsf{no}.
\smallskip



\begin{prop}\label{prop:MRCSPalgoConvergence}
Suppose that $D$ is acyclic. Then
if none, one, or both of the tests  \nf{(Low)} and \nf{(Dom)} are used,
  the algorithm converges after a finite number of iterations,
  and, at the end of the algorithm, $c_{od}^{\mathrm{UB}}$ is equal to the cost of an
  optimal solution of the \MRCSP if such a solution exists, and to $+\infty$
  otherwise.
\end{prop}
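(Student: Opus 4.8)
The plan is to prove the three claims — finite termination, correctness when an optimal solution exists, and the $+\infty$ output when it does not — in that order, and to handle the three test-configurations (no test, (Dom) only, (Low) only, both) uniformly wherever possible. First I would observe that the acyclicity of $D$ bounds the length of any simple path, and that the algorithm only ever inserts into $\mathsf{L}$ paths of the form $P+a$ obtained by one-arc extensions starting from the empty path at $o$; hence every path ever appearing in $\mathsf{L}$ is an $o$-$v$ path for some $v$, and its arc-length is at most the number of vertices. So the total number of distinct paths that could ever be processed is finite. To get actual termination I would argue that no path is processed twice: with (Dom) active, once $P$ is extended it is added to $\mathsf{L}_v^{\mathrm{nd}}$, and a second copy of $P$ extracted later would be dominated by the stored copy (since $\re_P \rleq \re_P$), so it is not re-extended; without (Dom), I would instead rely on the fact that in an acyclic digraph the number of $o$-$v$ paths is finite and each extraction strictly consumes one element of $\mathsf{L}$ while extensions only produce strictly longer paths, so a potential/measure argument (e.g. multiset of remaining path-lengths, bounded above) terminates. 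This bookkeeping is routine but must be done carefully for each test configuration.

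Next, for correctness when a feasible solution exists, let $c^\star$ be the optimal cost and let $P^\star = (a_1,\dots,a_k)$ be an optimal $o$-$d$ path, so $\rmeas(\re_{P^\star})=0$ and $\rcost(\re_{P^\star})=c^\star$. The key step is to show that some optimal path (not necessarily $P^\star$ itself) survives all pruning and reaches Step~\ref{step:destination}. I would prove by induction on $j$ that for each prefix $P^\star_j=(a_1,\dots,a_j)$ there is a path $Q_j$ that is enumerated (extracted from $\mathsf{L}$ and passes the tests) with $\re_{Q_j}\rleq \re_{P^\star_j}$: in the inductive step, either the stored non-dominated path at the current vertex that dominates $P^\star_j$ is itself such a $Q_j$, or $P^\star_j$ is stored; and compatibility of $\rleq$ with $\rplus$, together with monotonicity of $\rcost$ and $\rmeas$, guarantees the extension $Q_j + a_{j+1}$ still satisfies $\rmeas(\re_{Q_j+a_{j+1}} \rplus b_{v_{j+1}})=0$ and $\rcost(\re_{Q_j+a_{j+1}} \rplus b_{v_{j+1}}) \le \rcost(\re_{P^\star} ) \le c_{od}^{\mathrm{UB}}$, so the (Low) test does not prune it (here I use $b_{v_{j+1}}$ as a lower bound on the remaining $v_{j+1}$-$d$ resources and the fact that $c_{od}^{\mathrm{UB}}$ only decreases and stays $\ge c^\star$). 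At $j=k$ this yields an $o$-$d$ path $Q_k$ with $\re_{Q_k}\rleq\re_{P^\star}$, hence $\rmeas(\re_{Q_k})=0$ and $\rcost(\re_{Q_k})\le c^\star$, so the update in Step~\ref{step:destination} sets $c_{od}^{\mathrm{UB}}\le c^\star$; and $c_{od}^{\mathrm{UB}}$ can never drop below $c^\star$ because it is only ever set to the cost of a genuine feasible $o$-$d$ path. Therefore $c_{od}^{\mathrm{UB}}=c^\star$ at termination.

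For the case where no feasible $o$-$d$ path exists, I would note that $c_{od}^{\mathrm{UB}}$ is updated only in Step~\ref{step:destination} and only when $\rmeas(\re_P)=0$ for an $o$-$d$ path $P$; since no such path exists, no update ever occurs and the value returned is the initial $+\infty$. The main obstacle I anticipate is the termination argument in the configuration where \emph{neither} test is used: there the dominance list gives no protection against re-enumerating the same path, so I would need to invoke acyclicity more forcefully — formalizing that every path in $\mathsf{L}$ is simple (no vertex repeats, because $D$ is acyclic) and hence drawn from a fixed finite set, and that the algorithm processes each such path at most finitely often (e.g. because each path of length $\ell$ can be produced only by extending one of the finitely many paths of length $\ell-1$, and an induction on $\ell$ bounds the number of times a length-$\ell$ path can enter $\mathsf{L}$). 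Packaging this cleanly, so that the same lemma covers all four test configurations without case explosion, is the part that needs the most care; everything else reduces to monotonicity of $\rcost,\rmeas$ and compatibility of $\rleq$ with $\rplus$.
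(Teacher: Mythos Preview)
Your proposal is correct and follows essentially the same approach as the paper's sketch: finiteness from acyclicity, and correctness via an inductive replacement argument showing that some dominating companion of each prefix of an optimal path survives both tests, using compatibility of $\rleq$ with $\rplus$ and monotonicity of $\rcost,\rmeas$. One remark: the termination ``obstacle'' you flag in the no-test case is not really there, since every path in $\mathsf{L}$ arises exactly once as the one-arc extension of its unique prefix, so by induction on length each $o$-$v$ path is inserted (hence extracted) at most once, and acyclicity makes the set of such paths finite---no potential function or multiset argument is needed.
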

We underline that the result does not depend on the key used. The proof being rather technical, we only sketch it here to underline the main ideas. Details are available in the preamble of Section 8 in \citep{parmentier2015algorithms}. Practical choices of key and tests are given after the proof.

\begin{proof}[Sketch of the proof]
The enumeration scheme ensures that an $o$-$v$ path is considered at most once. The finite number of paths in an acyclic graph ensures convergence.
The update mechanism of $c_{od}^{\mathrm{UB}}$ ensures that, at the end of the
algorithm, $c_{od}^{\mathrm{UB}}$ is equal to the cost of the best $o$-$d$ path considered.
A given path $P$ is considered if and only if none of its subpaths is discarded by the tests. Proving that there exists an optimal path $P$ whose subpaths are not discarded is rather technical, but relies on simple ideas.
The dominance test relies on the fact that there is an
optimal solution whose subpaths are all non-dominated. Such a path can be built using an easy recursion. The lower bound test comes from that fact that $\re_P \rplus b_v$ is a lower bound on the resource of any $o$-$d$ path starting by $P$. Hence, if $P$ is the subpath of an optimal path, it satisfies
$\rmeas(\re_{P}\rplus b_v) =0$, and $\rcost(\re_{P}\rplus b) \leq \rcost(Q)$
for any feasible $o$-$d$ path $Q$, giving $\rcost(\re_{P}\rplus b) \leq
c_{od}^{\mathrm{UB}}$.
\end{proof}

\citet{irnich2005shortest} indicate that a traditional choice in the literature is to use the \textsf{key} $\rcost(\re_P)$ for Step~\ref{step:key} the dominance \textsf{test} \nf{(Dom)} in Step~\ref{step:test}. The lattice ordered monoid framework is not required to use this test and this key.
The main advantage of this framework is
that it enables to use the practically efficient algorithm in
\cite{parmentier2015algorithms} to compute good quality lower bounds $b_v$ on
the resource $\re_Q$ of all the $v$-$d$ paths $Q$ for each vertex $v$ in $V$.
When these bounds are computed, we can use the lower bound \textsf{test} \nf{(Low)}, and the \textsf{key} $\rcost(\re_P \rplus b_v)$.
This \textsf{key} is a
lower bound on the cost of any $o$-$d$ path starting by $P$.
As we will see in \Cref{sub:numericcal_results_single},  on our problem, the introduction of the lower bound test and of this new key both lead to a drastic speed-up with respect to the traditional choice mentioned above.
This speed-up is not specific to our problem
\citep{parmentier2015algorithms,parmentier2017aircraft}, and probably comes
from the fact that \nf{(Low)} discards many paths and $\rcost(\re_P \oplus b_v)$ is a good evaluation of how promising a path $P$ is.
Furthermore, on our problem, computing the bounds takes around $8\%$
$(0.15\text{ms})$ percent of the total solving time of the fastest enumeration
algorithm.

\begin{rem}
The algorithm to compute the lower bounds \citep{parmentier2015algorithms} is practically efficient because, if it requires $O(|V||A|)$ operations of the monoid in the worst case, it only performs $O(|A|)$ operations of the monoid on all examples considered. By ``good quality lower bound'' $b_v$, we mean that $b_v$ is not far from the greatest possible lower bound, which is the meet of the resource of all the $v$-$d$ paths. In fact, if $(\rset,\rplus,\rleq)$ is a dioid \citep{gondran2008graphs}, that is, if $\rplus$ distributes with respect to $\meet$, the bound $b_v$ returned by the algorithm is the greatest lower bound. 
Unfortunately, the monoid we use in this paper is not a dioid.
\end{rem}

\begin{rem}\label{rem:LabelSetting}
The enumeration algorithm described is also called a label-correcting algorithm
\citep{irnich2005shortest}. A more frequent alternative to solve the resource
constraint shortest path problem is to use a
label-setting algorithm. For instance, the Boost \texttt{C++} library \citep{boost2012libraries}
implements this kind of algorithm. Our enumeration algorithm can be turned
into a label setting algorithm by adjusting the following parts. In Step~\ref{step:key}, instead
of selecting a path, we select a \emph{vertex} $v$ with minimum ``key'' and
extend all the non-dominated paths in $\mathsf{L}_{v}^{\mathrm{nd}}$ by all
  outgoing edges of $v$. Under the assumption that the
  graph is acyclic, which is the case in this paper, vertices
  can be extended according to a topological order on the graph.
\end{rem}


\section{Solution approach to single unit commitment}
\label{sec:solution_approach_to_single_unit_commitment}

\newcommand{\Stay}{\mathsf{Stay}}
\newcommand{\Change}{\mathsf{Change}}
\newcommand{\infeas}{\mathsf{infeasible}}

\newcommand{\stay}{\mathsf{st}}
\newcommand{\change}{\mathsf{ch}}

We now model the single thermal unit commitment problem as a \MRCSP. 

\subsection{Digraph}
\label{sub:digraph}
Let $D =
 (V,A)$ be the digraph defined as follows. The vertex set $V$ is $S\times [T]
 \cup\{o,d\}$, where $o$ is an origin vertex, and $d$ a destination vertex.
The arc set $A$ is the set of pairs $\bp{(s,t),(s',t')}$ such that $(s,s',t-t') \in \cal A$
completed with an initial arc $\big(o,(s^{\mathrm{b}},1)\big)$, where $s^{\mathrm{b}}$ is
the initial level of the unit, and the final arcs $\big((s,T),d\big)$ for each level $s$ in $S$.

\begin{prop}
There is a bijection between the set of production plans  and the set of $o$-$d$ paths in the digraph $D$.
\end{prop}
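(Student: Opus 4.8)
The plan is to exhibit an explicit pair of mutually inverse maps between the two sets, obtained by reading one structure off the other arc by arc. The starting point is a structural description of an arbitrary $o$-$d$ path in $D$: the only arc of $D$ with tail $o$ is $\bp{o,(s^{\mathrm{b}},1)}$, so every $o$-$d$ path begins with it; the only arcs with head $d$ are $\bp{(s,T),d}$ for $s\in S$, so every $o$-$d$ path ends with one of these; and every other arc joins some $(s,t)\in S\times[T]$ to some $(s',t')\in S\times[T]$ with $(s,s',t'-t)\in\calA$. Hence an $o$-$d$ path can be written in a unique canonical form
\[
\bp{o,(s_1,t_1)},\ \bp{(s_1,t_1),(s_2,t_2)},\ \ldots,\ \bp{(s_k,t_k),(s_{k+1},t_{k+1})},\ \bp{(s_{k+1},t_{k+1}),d},
\]
with $(s_1,t_1)=(s^{\mathrm{b}},1)$, $t_{k+1}=T$, and $(s_i,s_{i+1},t_{i+1}-t_i)\in\calA$ for every $i\in\{1,\dots,k\}$.

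I would then map a production plan $(s_1,t_1,\alpha_1),\dots,(s_k,t_k,\alpha_k)$ to the path in this canonical form obtained by putting $s_{k+1}:=s^{\mathrm{final}}_{\alpha_k}$ and $t_{k+1}:=T$. The six conditions of \Cref{def:prod_plan} are exactly what certifies that this is an $o$-$d$ path: $s_1=s^{\mathrm{b}}$ and $t_1=1$ make the initial arc attach; the identities $s^{\mathrm{init}}_{\alpha_i}=s_i$, $s^{\mathrm{final}}_{\alpha_i}=s_{i+1}$ and $t_i+\tau_{\alpha_i}=t_{i+1}$ (valid for $i<k$, and for $i=k$ via $t_k+\tau_{\alpha_k}=T$ together with the definitions of $s_{k+1},t_{k+1}$) show that the triple $(s_i,s_{i+1},t_{i+1}-t_i)$ coincides with $\alpha_i$, hence lies in $\calA$, so the corresponding pair is an arc of $A$ and consecutive arcs share their endpoints; and $t_{k+1}=T$ makes the final arc attach.

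For the reverse map I would take an $o$-$d$ path, put it in canonical form, set $\alpha_i:=(s_i,s_{i+1},t_{i+1}-t_i)\in\calA$ for $i\in\{1,\dots,k\}$, and return $(s_1,t_1,\alpha_1),\dots,(s_k,t_k,\alpha_k)$; the six conditions then follow at once by unwinding the definitions (e.g.\ $t_k+\tau_{\alpha_k}=t_{k+1}=T$). That the two maps are mutually inverse is immediate from the construction: the forward map records the consecutive vertices of the path, and these vertices in turn determine each $\alpha_i$ through the plan constraints, so either composition is the identity.

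The only real care is bookkeeping: a plan of length $k$ corresponds to a path with $k$ internal arcs, so one must synthesize the terminal vertex $(s^{\mathrm{final}}_{\alpha_k},T)$ from the last transition and check that the final internal arc and the final arc are compatible with it; this index shift, together with verifying that every synthesized pair really belongs to $A$, is the main (routine) obstacle. One may also note that a production plan exists only when $T\ge 2$ --- every transition has duration at least $1$ --- which matches the fact that for $T\ge 2$ every $o$-$d$ path contains at least one internal arc; the degenerate case $T=1$, in which $D$ still has the $o$-$d$ path $o\to(s^{\mathrm{b}},1)\to d$ but no production plan, is tacitly excluded.
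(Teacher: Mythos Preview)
Your proof is correct and follows the same explicit-bijection approach as the paper, but with more care: you correctly introduce the extra internal vertex $(s^{\mathrm{final}}_{\alpha_k},T)$ so that the final arc $\bp{(s,T),d}$ actually attaches, whereas the paper's written path $\bp{(s_k,t_k),d}$ glosses over the fact that $t_k<T$. Your remark on the degenerate case $T=1$ is also a valid observation the paper does not address.
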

\begin{proof}
Consider a production plan as defined in \ref{def:prod_plan}. By definition,
$(s_i,s_{i+1},t_i-t_{i+1}) \in \cal A$ for all $i \in \{ 1, \dots, k-1 \}$. Hence, the path formed with the sequence of arcs
\begin{equation}\label{eq:path_to_plan}
\bp{o,(s_1,t_1)} \bp{(s_1,t_1), (s_2, t_2)} \dots \bp{(s_{k-1},t_{k-1}), (s_k, t_k)} \bp{(s_k, t_k), d}
\end{equation}
exists in the digraph $D$.

Conversely, by definition of the digraph $D$, any $o$-$d$ path is a sequence of arcs of the form \ref{eq:path_to_plan}, to which we can associate a production plan that satisfies \ref{def:prod_plan}.
\end{proof}

\subsection{A monoid for minimum layer duration constraints}

Before turning to the whole thermal unit commitment problem, this section introduces a model to take only into account the constraints on the minimum time on layers \ref{rule:min_layer_duration}.

In order to satisfy the minimum duration in a layer, a counter for the time
spent in a given layer is used. 
When a layer change occurs, this counter is used to check if the time spent in the layer exceeds the minimum duration in a layer $\tau_{\mathrm{lay}}$.
This approach can be modelled with the following monoid.
The underlying set $\sset^{\mathrm{lay}}$ of our monoid is the disjoint union $\Stay \cup \Change \cup \{ \infeas \}$
where $\Stay$ is identified with $\R_+$ and $\Change$ is identified with $\R_+^2$.
An element of $\Stay$ is denoted by $\stay(a)$ with $a \in \R_+$, and an element of $\Change$
by $\change(x, y)$ with $(x,y) \in \R_+^2$.

The semantic of our monoid is as follows: when an arc (resp.~path) of our graph is decorated with an element  $\stay(x) \in \Stay$,
it means that by following this arc (resp.~path), the unit stays in the same layer for a duration $x$.
An element $\change(x, y) \in \Change$ decorating an arc means that a change of layer happens on this arc.
When this arc  (resp.~path) is followed, the unit stays in its initial layer
for a duration of $x$, changes the layer, and then stays in the final layer for a duration $y$.
An element $\change(x, y) \in \Change$ decorating a path means that at least one change of layer happens along this path, and that the unit has stayed 
for a duration $x$ in the first layer encountered, and a duration $y$ in the last layer encountered. 
The element $\infeas$ denotes a violation of a minimum duration in a layer along the arc (resp.~path).

Following this semantic, the arcs of our graphs are decorated with elements of our monoid as follows:
\begin{itemize}
\item An arc corresponding to a transition $\alpha$ which does not induces a change of layer is decorated with the element $\stay(\tau_{\alpha})$, where $\tau_{\alpha}$ is the duration of the transition $\alpha$.
\item An arc corresponding to a transition which induces a change of layer is decorated with the element $\change(0, 0)$.
\item The initial arc (from the origin vertex $o$ to the vertex of the initial
  level) is decorated with $\change(0,
  \tau_{\mathrm{lay}}-\tau_{\textrm{init}})$, where $\tau_{\textrm{init}}$ is
  the duration after which the unit is allowed to leave the layer of its
  initial level.
\item Final arcs (i.e. arcs arriving at the destination node) are decorated with $\stay(0)$, the neutral element of our monoid (see below).
\end{itemize}


We define our monoid operation $\splus$ as follows :
\begin{subequations}\label{eq:monoidSum}
\begin{align}
&\stay(a) \splus  \stay(b)  = \stay(a + b)         &\text{ for all } a, b \in \R_+ \\
&\stay(a) \splus  \change(x, y) = \change(a + x, y) &\text{ for all } a, x, y \in \R_+\\
&\change(x,y) \splus  \stay(a) = \change(x, y + a) &\text{ for all } a, x, y \in \R_+\\{}
&\change(x, y) \splus  \change(u, v) =  \left\{\begin{array}{ll}
\infeas & \text{if } y + u < \tau_{\mathrm{lay}}, \\
\change(x,v) & \text{otherwise,}
\end{array}\right.  &\text{ for all }  x, y, u, v \in \R_+ \label{eq:infeasibleSum}\\
&\infeas \splus  \stay(a) = \stay(a) \splus  \infeas = \infeas &\text{ for all } a \in \R_+ \\
&\change(x,y) \splus  \infeas = \infeas \splus  \change(x,y) = \infeas &\text{ for all }  x, y \in \R_+
\end{align} 
\end{subequations}

It is worthwhile to intrepret this definition under the light of the semantic of our monoid.
Equation~\eqref{eq:infeasibleSum} means that performing two consecutive changes
of layers can yield an infeasible path if the minimum duration in the layer
in-between is not respected. Otherwise, the minimum duration constraint is
respected for this layer, and we can ``forget'' about the time spent in this layer.
The last two equations mean that if a part of the path is infeasible, then the whole path is infeasible.
The remaining ones have obvious interpretations.



\begin{prop}
The set $\Stay \cup \Change \cup \{ \infeas \}$, equipped with the operator
$\splus$ and with $\stay(0)$ as a neutral element forms a monoid.
\end{prop}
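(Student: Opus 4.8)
The plan is to verify the three monoid axioms in turn: closure of $\splus$, neutrality of $\stay(0)$, and associativity, the last being where essentially all the work lies. \emph{Closure} is immediate from inspection of \eqref{eq:monoidSum}: every right-hand side is either an element of $\Stay$ (a sum of nonnegative reals is a nonnegative real), an element of $\Change$ (each coordinate is again such a sum), or the symbol $\infeas$. Along the way I would record the convention $\infeas \splus \infeas = \infeas$, the only pair of operands not listed explicitly in \eqref{eq:monoidSum}.

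\emph{Neutrality of $\stay(0)$} is checked case by case on the type of the other operand: $\stay(0)\splus\stay(b)=\stay(b)$, $\stay(0)\splus\change(x,y)=\change(x,y)$ and $\change(x,y)\splus\stay(0)=\change(x,y)$ come directly from the first three lines of \eqref{eq:monoidSum}, and $\stay(0)\splus\infeas=\infeas\splus\stay(0)=\infeas$ from the fifth line. This is purely routine.

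For \emph{associativity}, I would first note from the last two lines of \eqref{eq:monoidSum} (together with the convention above) that $\infeas$ is absorbing: $z\splus\infeas=\infeas\splus z=\infeas$ for every $z$. Consequently, whenever at least one of $z_1,z_2,z_3$ equals $\infeas$, both bracketings of $z_1\splus z_2\splus z_3$ collapse to $\infeas$, so associativity is trivial there. It remains to treat the $2^3=8$ combinations with $z_i\in\Stay\cup\Change$. When all three lie in $\Stay$ the identity reduces to associativity of $+$ on $\R_+$. The six mixed cases (SSC, SCS, CSS, SCC, CSC, CCS, in the obvious notation) are all handled the same way: a $\Stay$-operand only ever gets added into the first or last coordinate of a $\Change$ element, whereas the feasibility test in \eqref{eq:infeasibleSum} inspects only the two coordinates adjacent to the change, so neither the outcome of that test nor the surviving coordinates depend on how the $\Stay$-operands are grouped.

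The one genuinely substantive case — and the main obstacle — is $\change(x,y)\splus\change(u,v)\splus\change(p,q)$. Here I would compute both bracketings explicitly. The left bracketing is $\infeas$ if $y+u<\tau_{\mathrm{lay}}$, and otherwise equals $\change(x,v)\splus\change(p,q)$, which is $\infeas$ if $v+p<\tau_{\mathrm{lay}}$ and $\change(x,q)$ otherwise; the right bracketing is $\infeas$ if $v+p<\tau_{\mathrm{lay}}$, and otherwise equals $\change(x,y)\splus\change(u,q)$, which is $\infeas$ if $y+u<\tau_{\mathrm{lay}}$ and $\change(x,q)$ otherwise. Hence both bracketings equal $\infeas$ exactly when $y+u<\tau_{\mathrm{lay}}$ or $v+p<\tau_{\mathrm{lay}}$, and equal $\change(x,q)$ in all other cases. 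The key observation making this work is that the two feasibility checks involve the disjoint, ``local'' coordinate pairs $\{y,u\}$ and $\{v,p\}$ and therefore commute, while the outer coordinates $x$ and $q$ are untouched by either check. This establishes associativity and completes the proof of the proposition.
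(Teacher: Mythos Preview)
Your proof is correct and follows essentially the same approach as the paper: a case analysis on the types of the operands, with the triple-$\Change$ case done explicitly and the remaining cases reduced to associativity of real addition. You are somewhat more systematic than the paper (you explicitly record closure, the absorbing property of $\infeas$, and the missing $\infeas\splus\infeas$ case, and you give a uniform conceptual reason for the mixed $\Stay$/$\Change$ cases rather than just writing out two of them), but the substance is the same.
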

\begin{proof}
It is obvious from the definition of $\splus$ that $\stay(0)$ is a neutral element.
It remains to check the associativity of $\splus$. Several cases have to be checked.
In particular, one can verify that for any $a,b,x,y,u,v \in \R_+$ , we have 
\begin{align*}
& (\stay(a) \splus \change(x,y) ) \splus \stay(b) = \stay(a) \splus (\change(x,y)  \splus \stay(b)) = \change(a+x, y+b), \\
& (\change(a,b) \splus \change(u,v)) \splus \change(x,y) = \change(a,b) \splus (\change(u,v) \splus \change(x,y) ) =  \left\{\begin{array}{ll}
 \infeas & 
 \begin{array}{ll}
 \text{if } b + u < \tau_{\mathrm{lay}} \\ \text{or } v + x <  \tau_{\mathrm{lay}} 
 \end{array}, \\
 \change(a,y) & \text{otherwise,} \end{array}\right. \\
& (\change(u,v) \splus \stay(a)) \splus \change(x,y) = \change(u,v) \splus (\stay(a) \splus \change(x,y) ) =  \left\{\begin{array}{ll}
    \infeas & \text{if } v + a + x < \tau_{\mathrm{lay}}, \\
 \change(u,y) & \text{otherwise}.
 \end{array}\right.
\end{align*}
The other cases are trivial to check.
\end{proof}


We endow our monoid $\sset^{\mathrm{lay}}$ with the order $\sleq$ defined as follows.
\begin{subequations}\label{eq:partialOrder}
  \begin{alignat}{3}
  \stay(a) &\sleq \stay(b) &\enskip&  \text{if } a \geq b &\enskip& \text{for all }a,b\in \R_+ \label{eq:sleqstst}\\
   \stay(a) &\sleq \change(x,y)  &\enskip&  \text{if } a \geq x \text{ and } a \geq y  &\enskip& \text{for all }a,x,y \in \R_+ \label{eq:sleqstch}\\
   \change(x,y)&\nsleq \stay(a) &\enskip&  &\enskip& \text{for all }x,y,a \in \R_+ \label{eq:sleqchst}\\
   \change(x,y)&\sleq \change(u,v)  &\enskip&  \text{if } x \geq u \text{ and }y\geq v  &\enskip& \text{for all } x,y,u,v \in \R_+ \\
   \stay(a)&\sleq \infeas  &\enskip&  \text{and }\infeas \nsleq \stay(a)  &\enskip& \text{for all }a \in \R_+ \label{eq:sleqstinfeas} \\
   \change(x,y) &\sleq \infeas  &\enskip&  \text{and } \infeas \nsleq \change(x,y) &\enskip& \text{for all }x,y \in \R_+ 
  \end{alignat}
\end{subequations}

\begin{prop}\label{prop:monoidSl}
$(\sset^{\mathrm{lay}},\splus,\sleq)$ is a lattice ordered monoid with meet operator $\smeet$ given by \eqref{eq:meet}.
\end{prop}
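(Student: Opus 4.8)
The plan is to establish, in order, the three ingredients in the definition of a lattice ordered monoid given in \Cref{sec:ordered_monoid_framework}: that $\sleq$ is a partial order, that $(\sset^{\mathrm{lay}},\sleq)$ is a lattice whose meet is the operator $\smeet$ of \eqref{eq:meet}, and that $\sleq$ is compatible with $\splus$. That $(\sset^{\mathrm{lay}},\splus)$ is a monoid with neutral element $\stay(0)$ is exactly the previous proposition, so it may be used freely.

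First I would check that the relation $\sleq$ of \eqref{eq:partialOrder} is a genuine partial order on the disjoint union $\Stay\cup\Change\cup\{\infeas\}$. Reflexivity holds on each of the three pieces; antisymmetry holds because the comparisons internal to $\Stay$ and to $\Change$ reduce to the order $\ge$ on $\R_+$ and on $\R_+^2$ respectively, while the cross-type comparisons $\stay(a)\sleq\change(x,y)$, $\stay(a)\sleq\infeas$, $\change(x,y)\sleq\infeas$ only ever hold in one direction. Transitivity is handled by a short case split on the types of the three elements involved: each case reduces either to transitivity of $\ge$ on $\R_+$ (sometimes combined with elementary facts such as $\max(x,u)\ge x$), or to the observation that $\infeas$ is the greatest element, which disposes of every chain terminating at $\infeas$.

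Next I would exhibit meets and joins. The structural point is that $\infeas$ is the top element and that a $\Stay$-element can sit below a $\Change$-element but a $\Change$-element is never below a $\Stay$-element; hence, for any pair, the set of lower bounds is either a subset of $\Stay$ or a ``$\Change$-part together with a $\Stay$-tail'', and in either case it has a greatest element. Running through the type combinations gives $\stay(a)\smeet\stay(b)=\stay(\max(a,b))$, $\stay(a)\smeet\change(x,y)=\stay(\max(a,x,y))$, $\change(x,y)\smeet\change(u,v)=\change(\max(x,u),\max(y,v))$, and $e\smeet\infeas=e$ for every $e$, which is precisely \eqref{eq:meet}; in each case one verifies that the proposed element is a lower bound of the pair and that every lower bound is $\sleq$ it. Joins are obtained by the mirror-image computation, with $\max$ replaced by $\min$ throughout and $e\sjoin\infeas=\infeas$, and the same branch analysis shows these are least upper bounds. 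This proves $(\sset^{\mathrm{lay}},\sleq)$ is a lattice.

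Finally I would verify compatibility of $\sleq$ with $\splus$, i.e.\ that for every fixed $e'$ the maps $e\mapsto e\splus e'$ and $e\mapsto e'\splus e$ are non-decreasing; both directions must be checked since $\splus$ is not commutative. Fixing $e_1\sleq e_2$, I would split on the types of $e_1$, $e_2$, $e'$. If $e_2=\infeas$ there is nothing to do because $\infeas$ is absorbing for $\splus$ and is the top for $\sleq$. If $e_1$ and $e_2$ have the same type ($\Stay$ or $\Change$), the translated pair stays within a single line of the definition \eqref{eq:monoidSum}, and the desired inequality follows from monotonicity of addition together with the defining inequalities of $\sleq$; the only subcase needing attention is $\change\splus\change$, where one observes that if the larger product falls in the ``otherwise'' branch of \eqref{eq:infeasibleSum} then monotonicity of $\change(\cdot,\cdot)$ applies, and if it falls in the $\infeas$ branch the conclusion is immediate. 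The genuinely cross-type situation is $\stay(a)\sleq\change(x,y)$, meaning $a\ge x$ and $a\ge y$: here $e_1\splus e'$ and $e_2\splus e'$ are produced by different lines of \eqref{eq:monoidSum}, so one compares directly, for instance $\stay(a)\splus\change(u,v)=\change(a+u,v)$ against $\change(x,y)\splus\change(u,v)$, which is either $\infeas$ (done) or $\change(x,v)$, in which case $a+u\ge x$ yields $\change(a+u,v)\sleq\change(x,v)$; the remaining cross-type subcases are analogous and equally short. Putting the three parts together proves the proposition. I expect this last step to be the main obstacle: although every individual case is easy, the bookkeeping is delicate because both $\splus$ and $\sleq$ are piecewise-defined and the cross-type comparison forces one to match outputs coming from different lines of \eqref{eq:monoidSum}, the $\change\splus\change$ rule with its possible jump to $\infeas$ being the one place where genuine care is required.
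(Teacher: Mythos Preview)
Your proposal is correct and follows essentially the same route as the paper: first establish that $(\sset^{\mathrm{lay}},\sleq)$ is a lattice by verifying the meet formulas in \eqref{eq:meet} case by case (with the only non-trivial case being $\stay(a)\smeet\change(x,y)$, handled via the observation that any lower bound of a $\Stay$-element must itself lie in $\Stay$), and then check compatibility of $\sleq$ with $\splus$ by a case analysis that singles out the subcases involving the $\change\splus\change$ rule \eqref{eq:infeasibleSum} and the cross-type comparison $\stay(a)\sleq\change(x,y)$ as the only ones requiring care. The paper omits your preliminary verification that $\sleq$ is a partial order, treating it as evident, but otherwise the argument and the highlighted subcases coincide.
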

\begin{subequations}\label{eq:meet}
\begin{alignat}{2}
\stay(a) \smeet  \stay(b)  &= \stay(\max(a, b))        &\enskip&\text{ for all } a, b \in \R_+ \label{eq:meetstst}\\
\stay(a) \smeet  \change(x, y)=\change(x,y) \smeet  \stay(a)  &= \stay(\max(a,x,y)) &\enskip&\text{ for all } a, x, y \in \R_+\label{eq:meetstch}\\
\change(x, y) \smeet  \change(u, v) &=  \change(\max(x,u),\max(y,v))  &\enskip&\text{ for all }  x, y, u, v \in \R_+ \label{eq:meetchch} \\
\infeas \smeet  \stay(a) &= \stay(a) \smeet  \infeas = \stay(a) &\enskip&\text{ for all } a \in \R_+ \label{eq:meatstinfeas}\\
\change(x,y) \smeet  \infeas &= \infeas \smeet  \change(x,y) = \change(x,y) &\enskip&\text{ for all }  x, y \in \R_+ \label{eq:meetchinfeas}
\end{alignat}
\end{subequations}
The join is defined and proved similarly, but it is not needed by the \MRCSP algorithms, we do not detail on the topic. 
\begin{proof}
We start by proving that $(\sset^{\mathrm{lay}},\sleq)$ is a lattice with meet operator $\smeet$. The definition of $\smeet$ in \eqref{eq:meetstst}, \eqref{eq:meetchch}, \eqref{eq:meatstinfeas}, and \eqref{eq:meetchinfeas} are trivial. We now consider \eqref{eq:meetstch}. Given $a,x$, and $y$ in $\R_+$, let $z=\stay(\max(a,x,y))$.
Equations \eqref{eq:sleqchst} and \eqref{eq:sleqstinfeas} implies that a lower bound on $\stay(a)$ is necessarily in $\Stay$. 
Equation \eqref{eq:sleqstst} and \eqref{eq:sleqstch} then ensures that a lower bound on $\stay(a)$ and $\change(x,y)$ is of the form $\stay(b)$ with $b \geq a$ and $b \geq \max(x,y)$, which gives $z = \stay(a) \smeet \change(x,y)$.
The join is defined similarly.

It remains to check the compatibility of $\splus$ and $\sleq$. There are several cases to check.
We detail only the cases where \eqref{eq:infeasibleSum} is involved, all the other ones being trivial. 
Let $a,x,y,z,t,u$, and $v$ be in $\R_+$.
We start by proving that $\stay(a) \sleq \change(x,y)$ implies $\stay(a) \splus
\change(u,v) \sleq \change(x,y) \splus \change(u,v)$.
Indeed, we have $\change(a+u,v) \sleq \infty$, and
as $a \geq x$ and $u\geq 0$, we have $a+u \geq x$, hence $\change(a+u,v) \sleq \change(x,v)$, 
and the result follows from \eqref{eq:monoidSum} and \eqref{eq:partialOrder}. 
Similarly, $\change(u,v+a) \sleq \infeas$  and $\change(u,v+a) \sleq \change(u,y)$ provides
$\change(u,v) \splus \stay(a) \sleq \change(u,v) \splus \change(x,y)$.
Suppose now $\change(x,y) \sleq \change(z,t)$. Then $x\geq z$ and $y+u \geq t + u$ give $\change(x,y) \splus \change(u,v) \sleq \change(z,t)+ \change(u,v)$. Symmetrically, $v + x \geq v+ z$ and $y\geq t$ gives $\change(u,v) \splus \change(x,y) \sleq \change(u,v) \splus \change(z,t)$.
This concludes the proof.
\end{proof}


The mapping $\rmeas^{\mathrm{lay}} : \sset^{\mathrm{lay}} \rightarrow \{0,1\}$ defined as follows is non-decreasing with respect to $\sleq$.
\begin{equation}\label{eq:rmeasS}
   \rmeas^{\mathrm{lay}}(\infeas) = 1 \quad \text{and} \quad \rmeas^{\mathrm{lay}}(\stay(a)) = \rmeas^{\mathrm{lay}}(\change(x,y)) = 0 \text{ for all } a,x,y \in \R_+.
 \end{equation} 

\begin{prop}\label{prop:SlencodesConstraint}
Let $p$ be a production plan,  $P$ be the corresponding $s$-$t$ path in $D$,
and $q_P^{\mathrm{lay}} \in \sset^{\mathrm{lay}}$ be its resource. Then $p$
satisfies the minimum duration constraint \emph{\ref{rule:min_layer_duration}} if and only if $\rmeas^{\mathrm{lay}}(\re_P^{\mathrm{lay}}) = 0$.
\end{prop}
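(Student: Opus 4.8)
The plan is to track what the resource $\re_P^{\mathrm{lay}}$ records as we read the path $P$ arc by arc, and show that the value $\infeas$ is reached along $P$ exactly when constraint \ref{rule:min_layer_duration} is violated. Write $P = a_0 a_1 \cdots a_k a_{k+1}$, where $a_0$ is the initial arc, $a_1,\dots,a_k$ are the arcs corresponding to transitions $\alpha_1,\dots,\alpha_k$ of the production plan $p$, and $a_{k+1}$ is the final arc decorated by the neutral element $\stay(0)$. For $0 \le i \le k+1$ let $q_i$ be the partial resource $\bigsplus_{j \le i} \re_{a_j}$. First I would establish the following invariant by induction on $i$: if none of the partial sums $q_0,\dots,q_i$ equals $\infeas$, then $q_i = \change(x_i, y_i)$ where $y_i$ is exactly the number of time steps the unit has spent, so far along $p$, in the layer it currently occupies after $a_i$ (counting from the last layer change, or from the beginning if no change has occurred yet), with the initial-arc decoration $\change(0,\tau_{\mathrm{lay}} - \tau_{\mathrm{init}})$ correctly accounting for the time already spent in the initial layer before the horizon. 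Indeed, as long as we stay in a layer, $\stay$-summands accumulate into the second coordinate via the rule $\change(x,y)\splus\stay(a)=\change(x,y+a)$, and the first coordinate is frozen; when a layer change occurs, we hit the rule \eqref{eq:infeasibleSum}, which outputs $\infeas$ precisely when $y + u < \tau_{\mathrm{lay}}$, i.e. when the total time in the layer just left (the previous accumulated $y$, plus any leading time $u$ from the change arc, which is $0$ here) is strictly less than $\tau_{\mathrm{lay}}$, and otherwise resets the second coordinate to start counting the new layer.

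Next I would match this invariant against the formal statement of \ref{rule:min_layer_duration}. The constraint says: for every index $i$ at which $\alpha_i$ changes layer, if $\alpha_j$ is the next layer change (so $t_j$ is the time the unit leaves the layer entered at $t_{i+1}$), then $t_j \ge t_{i+1} + \tau_{\mathrm{lay}}$, i.e. the unit spent at least $\tau_{\mathrm{lay}}$ steps in that intermediate layer. Combined with the analogous condition coming from the initial arc (the unit must stay $\tau_{\mathrm{lay}} - \tau_{\mathrm{init}}$ more steps in the initial layer), this is exactly the family of inequalities "$y \ge \tau_{\mathrm{lay}}$ at every layer change", which the invariant identifies with "$y + u \ge \tau_{\mathrm{lay}}$ in every application of \eqref{eq:infeasibleSum} encountered along $P$". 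Hence: if $p$ satisfies \ref{rule:min_layer_duration}, no application of \eqref{eq:infeasibleSum} produces $\infeas$, and once infeasibility is avoided it cannot appear (the last three equations of \eqref{eq:monoidSum} show $\infeas$ is absorbing, so it can only be created by \eqref{eq:infeasibleSum}), so $\re_P^{\mathrm{lay}} \ne \infeas$ and $\rmeas^{\mathrm{lay}}(\re_P^{\mathrm{lay}}) = 0$ by \eqref{eq:rmeasS}. Conversely, if $p$ violates \ref{rule:min_layer_duration}, let $i$ be the first violation; by the invariant the partial sum up to the arc realizing the next layer change after $i$ equals $\infeas$, and by absorption $\re_P^{\mathrm{lay}} = \infeas$, so $\rmeas^{\mathrm{lay}}(\re_P^{\mathrm{lay}}) = 1$.

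A few bookkeeping points need care. One is that associativity of $\splus$ (Proposition establishing the monoid) lets me read the sum left-to-right and group it as "time in current layer" without worrying about parenthesization; this is what licenses speaking of "the accumulated $y$" at each stage. Another is handling the very first layer correctly: the initial arc carries $\change(0, \tau_{\mathrm{lay}} - \tau_{\mathrm{init}})$, so the $y$-coordinate starts "pre-loaded", and at the first genuine layer change the test $y + u \ge \tau_{\mathrm{lay}}$ becomes $(\tau_{\mathrm{lay}} - \tau_{\mathrm{init}}) + (\text{steps in initial layer during the horizon}) \ge \tau_{\mathrm{lay}}$, which is the intended condition $t_j \ge t_{i+1} + \tau_{\mathrm{lay}}$ transplanted to the initial layer; I would remark that \ref{rule:min_layer_duration} as written only quantifies over $i \ge 1$, so the initial-layer condition is an input convention encoded by the arc decoration rather than part of the displayed constraint, and state it as such. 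The main obstacle is purely notational: pinning down the invariant precisely enough that the indices $t_{i+1}$, $t_j$ in \ref{rule:min_layer_duration} line up with the coordinate $y$ at the right application of \eqref{eq:infeasibleSum}, given that a single transition arc may span several time steps and that "the next change of layer" in the constraint skips over intermediate same-layer arcs exactly as the $\change\splus\stay$ rule does. No genuinely hard argument is needed once the invariant is stated correctly.
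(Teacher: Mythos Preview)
Your proof is correct and follows essentially the same strategy as the paper: an induction on the arcs of $P$ that pins down the semantic meaning of the partial resource and identifies $\infeas$ with a violation of \ref{rule:min_layer_duration}. The paper's proof is terser---it simply states the trichotomy for the final resource $q_P^{\mathrm{lay}}$ (it is $\stay(a)$, $\change(x,y)$, or $\infeas$, with the respective interpretations) and leaves the induction implicit---whereas you make the running invariant on the second coordinate $y_i$ explicit and match it against the indices $t_{i+1},t_j$ of constraint \ref{rule:min_layer_duration}. Your treatment of the initial arc is also more careful: since that arc carries $\change(0,\tau_{\mathrm{lay}}-\tau_{\mathrm{init}})$, every partial sum from $q_0$ onward is already in $\Change\cup\{\infeas\}$, so the $\Stay$ branch of the paper's trichotomy in fact cannot occur for a full $o$-$d$ path; your version sidesteps this minor looseness.
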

\begin{proof}
   A simple induction on the number of arcs in a path enables to prove that exactly one of the following is true :  
    \begin{itemize}
    \item $q_P^{\mathrm{lay}}$ is equal to $\stay(a)$ and the corresponding production plan 
    contains no layer change. In this case, $a$ is the total duration spent in the layer;
    \item $q_P^{\mathrm{lay}}$ is equal to $\infeas$ and the plan contains two consecutive layer 
    changes such that the duration between the two changes is non-greater than $\tau_{\mathrm{lay}}$;
    \item $q_P^{\mathrm{lay}}$ is equal to $\change(x,y)$ and the plant contains at least one layer change. In this case, $x$ is the time spent in the first layer of the production plan, and $y$ the time spent in the last layer.
    \end{itemize}
   The result follows.
\end{proof}



\subsection{A monoid for minimum mode duration constraints}
\label{sub:a_monoid_for_minimum_mode_duration_constraints}
 To model the minimum mode duration constraint, we use the lattice ordered
 monoid, \mbox{$(\sset^{\mathrm{mod}}, \splus, \sleq)$}, which is defined as
 $(\sset^{\mathrm{lay}}, \splus, \sleq)$, the only difference being that
 $\tau_{\mathrm{lay}}$ is replaced by $\tau_{\mathrm{mod}}$ in
 Equations~\eqref{eq:infeasibleSum} and \eqref{eq:rmeasS}. Given an arc $a$ and
 the corresponding transition $\alpha$, the resource of $a$ is $\change(0,0)$
 if $a$ is a mode change, and $\stay(\tau_\alpha)$ otherwise.
 The resource of a vertex starting in $s$ or ending in $t$ is $\change(0,0)$.
The two following propositions are proved like Propositions~\ref{prop:monoidSl} and~\ref{prop:SlencodesConstraint}.

\begin{prop}\label{prop:moinoidSm}
$(\sset^{\mathrm{mod}}, \splus, \sleq)$ is a lattice ordered monoid.
\end{prop}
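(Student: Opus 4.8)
The plan is to observe that the construction of $(\sset^{\mathrm{mod}}, \splus, \sleq)$ differs from that of $(\sset^{\mathrm{lay}}, \splus, \sleq)$ only through the value of a single numerical threshold, and that none of the arguments establishing Proposition~\ref{prop:monoidSl} ever exploits the specific value of that threshold; hence every step transfers verbatim with $\tau_{\mathrm{lay}}$ replaced by $\tau_{\mathrm{mod}}$. Concretely, I would first record that the underlying set $\Stay \cup \Change \cup \{\infeas\}$ is literally the same, and that the operation $\splus$ depends on $\tau_{\mathrm{lay}}$ only in Equation~\eqref{eq:infeasibleSum}, where the comparison $y+u < \tau_{\mathrm{lay}}$ decides whether $\change(x,y)\splus\change(u,v)$ equals $\infeas$ or $\change(x,v)$; substituting $\tau_{\mathrm{mod}}$ there defines $\splus$ on $\sset^{\mathrm{mod}}$, and $\stay(0)$ is still neutral by inspection. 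For associativity, I would reuse the finite case analysis of the monoid proposition for $\sset^{\mathrm{lay}}$: the only cases in which the threshold intervenes are $\change\splus\change\splus\change$ and $\change\splus\stay\splus\change$, and the displayed identities there remain correct after replacing $\tau_{\mathrm{lay}}$ by $\tau_{\mathrm{mod}}$ precisely because the computation never uses the numerical value of the threshold, only that it is a fixed constant.

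Next, for the lattice structure, the point is that the order $\sleq$ defined in \eqref{eq:partialOrder} does not mention $\tau_{\mathrm{lay}}$ at all, so $(\sset^{\mathrm{mod}},\sleq)$ and $(\sset^{\mathrm{lay}},\sleq)$ coincide as posets; consequently the meet formula \eqref{eq:meet} and the analogous join carry over unchanged, and the argument for the nontrivial case \eqref{eq:meetstch}—which only invokes \eqref{eq:sleqchst}, \eqref{eq:sleqstinfeas}, \eqref{eq:sleqstst}, and \eqref{eq:sleqstch}—applies word for word. Finally, for compatibility of $\splus$ with $\sleq$, I would revisit the case analysis in the proof of Proposition~\ref{prop:monoidSl}; its only nontrivial cases involve \eqref{eq:infeasibleSum}, and the inequalities used there (e.g. $a+u\geq x$ from $a\geq x$ together with $u\geq 0$, or $y+u\geq t+u$ from $y\geq t$, together with $\change(\cdot,\cdot)\sleq\infeas$ always holding) are monotonicity facts about addition on $\R_+$ that are insensitive to the threshold. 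Hence the non-decreasing property of $q\mapsto q\splus q'$ and $q\mapsto q'\splus q$ persists with $\tau_{\mathrm{mod}}$ in place of $\tau_{\mathrm{lay}}$.

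I do not expect a genuine obstacle here: the real content of the proposition is simply that the earlier proofs were written with the threshold constant treated as an opaque parameter. The one thing I would double-check is that no step of the $\sset^{\mathrm{lay}}$ arguments silently compared $\tau_{\mathrm{lay}}$ against the transition durations $\tau_\alpha$ or against $\tau_{\mathrm{init}}$; a quick inspection shows that no such comparison ever occurs, which makes the transfer immediate and justifies the claim that the proposition "is proved like Proposition~\ref{prop:monoidSl}."
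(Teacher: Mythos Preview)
Your proposal is correct and matches the paper's approach exactly: the paper simply states that Proposition~\ref{prop:moinoidSm} is proved like Proposition~\ref{prop:monoidSl}, and your argument spells out precisely why the transfer works, namely that the threshold enters only as an opaque constant in~\eqref{eq:infeasibleSum} while the order~\eqref{eq:partialOrder} and meet~\eqref{eq:meet} are threshold-free.
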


\begin{prop}\label{prop:SmEncodesConstraint}
Let $p$ be a production plan, let $P$ be the corresponding $s$-$t$ path in $D$,
and $q_P^{\mathrm{mod}}$ be its resource in $\sset$. Then $p$ satisfies the
minimum duration constraint \emph{\ref{rule:min_mode_duration}} if and only if
$\rmeas^{\mathrm{mod}}(\re_P^\mathrm{mod}) = 0$.
\end{prop}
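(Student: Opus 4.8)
The plan is to reuse, almost verbatim, the inductive argument from the proof of Proposition~\ref{prop:SlencodesConstraint}, exploiting that $\sset^{\mathrm{mod}}$ differs from $\sset^{\mathrm{lay}}$ only by the substitution of $\tau_{\mathrm{mod}}$ for $\tau_{\mathrm{lay}}$ in Equations~\eqref{eq:infeasibleSum} and~\eqref{eq:rmeasS}, and that the arc decorations used here are the exact ``mode'' counterparts of the ``layer'' ones. Before that, Proposition~\ref{prop:moinoidSm} is obtained the same way from Proposition~\ref{prop:monoidSl}: none of the case analyses establishing associativity, the lattice structure~\eqref{eq:meet}, or the compatibility of $\splus$ with $\sleq$ uses any property of $\tau_{\mathrm{lay}}$ beyond its being a fixed nonnegative constant, so replacing it by $\tau_{\mathrm{mod}}$ changes nothing, and $\rmeas^{\mathrm{mod}}$ is non-decreasing for $\sleq$ exactly as $\rmeas^{\mathrm{lay}}$ is.

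First I would set up the induction on the number of arcs of $P$ and prove the trichotomy, in perfect analogy with the layer case: exactly one of (i) $\re_P^{\mathrm{mod}} = \stay(a)$, with $p$ containing no mode change and $a$ the total time spent in the unique mode; (ii) $\re_P^{\mathrm{mod}} = \infeas$, with $p$ containing two consecutive mode changes separated by at most $\tau_{\mathrm{mod}}$ time steps; (iii) $\re_P^{\mathrm{mod}} = \change(x,y)$, with $p$ containing at least one mode change, $x$ the dwell time in the first mode of the plan, $y$ that in the last mode, and every intermediate mode held for more than $\tau_{\mathrm{mod}}$ steps. The inductive step is the same routine inspection of the six lines of~\eqref{eq:monoidSum} used for the layers: appending a $\stay$-arc increments the ``current'' mode counter ($a$ in case (i), $y$ in case (iii)); appending a $\change(0,0)$-arc closes the current mode and, by~\eqref{eq:infeasibleSum}, produces $\infeas$ precisely when that mode was held for fewer than $\tau_{\mathrm{mod}}$ steps. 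Once the trichotomy is in hand I would conclude: $\rmeas^{\mathrm{mod}}(\re_P^{\mathrm{mod}}) = 0$ iff we are not in case (ii), i.e. iff no two consecutive mode changes of $p$ are separated by at most $\tau_{\mathrm{mod}}$ steps, which is exactly constraint~\ref{rule:min_mode_duration}.

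The one step that I expect to need genuine care---and the only place this is not a literal transcription of the layer proof---is matching the monoid's notion of ``two consecutive mode changes'' (consecutive arcs carrying a $\change$ element) with the wording of~\ref{rule:min_mode_duration}, which speaks of ``the next change of mode'', that is, the smallest $j>i$ with $\mathsf{mode}(s^{\mathrm{final}}_{\alpha_j}) \neq \mathsf{mode}(s^{\mathrm{final}}_{\alpha_i})$. I would verify that the mode is constant on the indices strictly between $i$ and that $j$, so that no intermediate arc carries a $\change$ element and the $y$-component the monoid accumulates between the two $\change(0,0)$'s equals exactly $t_j - t_{i+1}$, the dwell time in the in-between mode; the condition $t_j \geq t_{i+1} + \tau_{\mathrm{mod}}$ of~\ref{rule:min_mode_duration} then matches the feasibility test in~\eqref{eq:infeasibleSum}. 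This is the same bookkeeping that silently underpins the layer case, so I anticipate only the need to state it cleanly rather than any real difficulty.
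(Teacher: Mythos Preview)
Your proposal is correct and matches the paper's own treatment exactly: the paper simply states that Propositions~\ref{prop:moinoidSm} and~\ref{prop:SmEncodesConstraint} ``are proved like Propositions~\ref{prop:monoidSl} and~\ref{prop:SlencodesConstraint},'' which is precisely the substitution-and-reuse argument you outline. Your extra paragraph making explicit the correspondence between consecutive $\change$ arcs in the monoid and the ``next change of mode'' in constraint~\ref{rule:min_mode_duration} is a welcome clarification that the paper leaves implicit, but it does not alter the approach.
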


\subsection{Full monoid}
\label{sub:full_monoid}
We can now reduce a single unit commitment problem to a \MRCSP on $D$ with
resources in $\sset = \sset^{\mathrm{lay}} \cup \sset^{\mathrm{mod}} \cup
\Z_+^3 \cup \R$. We endow $(\rset,\rplus,\rleq)$ with the componentwise sum
and order, the sum and order on $\sset^{\mathrm{lay}} $ and
$\sset^{\mathrm{mod}} $ being those defined in the previous sections, and the
sum and order on $\Z_+$ and $\R$ being the standard ones. $(\rset,\rplus,\rleq)$ is a lattice ordered monoid as a product of lattice ordered monoids.

The resource of an arc is $(q^{\mathrm{lay}}, q^{\mathrm{mod}},n^{\mathrm{s}},
n^{\mathrm{l}},n^{\mathrm{d}},\tilde{c})$, where $q^{\mathrm{mod}}$ and $
q^{\mathrm{lay}}$ are defined as in the previous sections. If $a$ has $s$ as
tail or $t$ as head, then $n^{\mathrm{s}}=
n^{\mathrm{l}}=n^{\mathrm{d}}=\tilde{c} = 0$. Otherwise, $\alpha$ denotes the
transition corresponding to $a$, we choose $n^{\mathrm{s}} =
n_{\mathrm{startup}}(\alpha)$, $n^{\mathrm{l}} = n_{\mathrm{layer}}(\alpha)$,
and $n^{d} = n_{\mathrm{deep}}(\alpha)$.
Finally, given an arc $a$, we define $\tilde{c}_a$ to be equal to $0$ if $a$
has $s$ as tail or $t$ as destination and to $\lambda_{\alpha,t}$ otherwise,
with $\alpha$ and $t$ being the transition and timestep corresponding to $a$.

We define $\rmeas : \rset \rightarrow \R$ and $\rcost : \rset \rightarrow \R$ as 
\begin{align*}
\rmeas\big((q^{\mathrm{lay}}, q^{\mathrm{mod}},n^{\mathrm{s}}, n^{\mathrm{l}},n^{\mathrm{d}},\tilde{c})\big) &= 
\max\big(
\rmeas^{\mathrm{lay}}(q^{\mathrm{lay}}),
\rmeas^{\mathrm{mod}}(q^{\mathrm{mod}}),
\ind_{(n^{\mathrm{max}}_{\mathsf{startup}},\infty)}(n^{\mathrm{s}}), 
\\&\quad\quad\quad\quad\quad
\ind_{(n^{\mathrm{max}}_{\mathsf{layer}},\infty)}(n^{\mathrm{l}}),
\ind_{(n^{\mathrm{max}}_{\mathsf{deep}},\infty)}(n^{\mathrm{d}})\big),\\
\rcost\big((q^{\mathrm{lay}}, q^{\mathrm{mod}},n^{\mathrm{s}}, n^{\mathrm{l}},n^{\mathrm{d}},\tilde{c})\big) &= \tilde{c},
\end{align*}
where $\ind_{I}$ denotes the indicator function of interval $I$. The following proposition concludes the reduction of the single unit commitment problem to a \MRCSP.

\begin{prop}
Let $p$ be a production plan, $P$ be the corresponding $s$-$t$ path in $D$, and $q_P$ be its resource in $\rset$. Then $p$ is feasible if and only if $\rmeas(q_P) = 0$, and its cost is $c(q_P)$.
\end{prop}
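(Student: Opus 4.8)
The plan is to reduce everything to the per-factor statements already established, exploiting the componentwise structure of $(\rset,\rplus,\rleq)$. Since the sum on $\rset$ is componentwise, the projection onto each factor is a monoid homomorphism and therefore commutes with the ordered path-sum $\bigrplus_{a\in P}$; writing $q_P=(q_P^{\mathrm{lay}},q_P^{\mathrm{mod}},n_P^{\mathrm{s}},n_P^{\mathrm{l}},n_P^{\mathrm{d}},\tilde c_P)$, each coordinate of $q_P$ is thus the sum, taken in the appropriate factor and along $P$, of the corresponding coordinates of the arc resources $q_a$, $a\in P$. The first thing I would record is that, restricted to the $\sset^{\mathrm{lay}}$ and $\sset^{\mathrm{mod}}$ factors, the arc resources prescribed in the full-monoid construction coincide exactly with those of the two preceding single-constraint constructions ($\change(0,0)$ on a layer, resp.\ mode, change, $\stay(\tau_\alpha)$ on the remaining transition arcs, the neutral element $\stay(0)$ on the origin and destination arcs, and the initial arc carrying $\change(0,\tau_{\mathrm{lay}}-\tau_{\mathrm{init}})$, resp.\ its mode analogue). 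Hence $q_P^{\mathrm{lay}}$ and $q_P^{\mathrm{mod}}$ are literally the resources considered in Propositions~\ref{prop:SlencodesConstraint} and~\ref{prop:SmEncodesConstraint}, so $\rmeas^{\mathrm{lay}}(q_P^{\mathrm{lay}})=0$ iff $p$ satisfies \ref{rule:min_layer_duration}, and $\rmeas^{\mathrm{mod}}(q_P^{\mathrm{mod}})=0$ iff $p$ satisfies \ref{rule:min_mode_duration}.

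Next I would handle the three integer coordinates. Using the correspondence between the arcs of $P$ and the triples $(s_i,t_i,\alpha_i)$ of the production plan $p$: the origin and destination arcs contribute $0$ to each of $n^{\mathrm{s}},n^{\mathrm{l}},n^{\mathrm{d}}$, while the arc of $P$ corresponding to transition $\alpha_i$ contributes $\ind_{\mathsf{startup}}(\alpha_i)$, $\ind_{\mathsf{layer}}(\alpha_i)$, $\ind_{\mathsf{deep}}(\alpha_i)$ respectively. Therefore $n_P^{\mathrm{s}}=\sum_{i=1}^k\ind_{\mathsf{startup}}(\alpha_i)$, $n_P^{\mathrm{l}}=\sum_{i=1}^k\ind_{\mathsf{layer}}(\alpha_i)$ and $n_P^{\mathrm{d}}=\sum_{i=1}^k\ind_{\mathsf{deep}}(\alpha_i)$, so that $\ind_{(n^{\mathrm{max}}_{\mathsf{startup}},\infty)}(n_P^{\mathrm{s}})=0$ iff $n_P^{\mathrm{s}}\le n^{\mathrm{max}}_{\mathsf{startup}}$, i.e.\ iff \ref{rule:max_nb_of_startups} holds; and likewise for \ref{rule:max_nb_of_change_of_layer} and \ref{rule:max_nb_of_deep_transitions}.

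Assembling the pieces: $\rmeas(q_P)$ is by definition the maximum of the five $\{0,1\}$-valued quantities treated above, hence $\rmeas(q_P)=0$ iff all five vanish, iff $p$ satisfies each of \ref{rule:min_layer_duration}--\ref{rule:max_nb_of_deep_transitions}, i.e.\ iff $p$ is feasible. For the cost, the origin and destination arcs carry $\tilde c=0$ while the arc of $P$ from $(s_i,t_i)$ to $(s_{i+1},t_{i+1})$ carries $\tilde c=\lambda(t_i,\alpha_i)$, so the last coordinate is $\tilde c_P=\sum_{i=1}^k\lambda(t_i,\alpha_i)$, which is precisely the cost \eqref{eq:cost_def} of $p$; and $\rcost(q_P)=\tilde c_P$ by definition of $\rcost$.

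I do not expect a genuine obstacle here: the substantive content — that the $\sset^{\mathrm{lay}}$- and $\sset^{\mathrm{mod}}$-components correctly encode the minimum-duration constraints, including the care needed because $\splus$ is non-commutative (which is why one works with the ordered path-sum rather than a multiset) — has already been dispatched in Propositions~\ref{prop:SlencodesConstraint} and~\ref{prop:SmEncodesConstraint}. What remains is bookkeeping; the only point deserving an explicit line is that restricting the product-monoid arc resources to a single factor reproduces the arc resources of the corresponding single-constraint model and that this restriction commutes with $\bigrplus_{a\in P}$, both of which are immediate from the componentwise definitions, so that the two cited propositions apply verbatim to $q_P^{\mathrm{lay}}$ and $q_P^{\mathrm{mod}}$.
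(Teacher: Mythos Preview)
Your proposal is correct and follows essentially the same approach as the paper: invoke Propositions~\ref{prop:SlencodesConstraint} and~\ref{prop:SmEncodesConstraint} for the two minimum-duration constraints, read off the three counting constraints from the integer coordinates, and recover the cost from the last coordinate. Your write-up is somewhat more explicit than the paper's about why projection onto each factor commutes with the path-sum, but the argument is the same.
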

\begin{proof}
Let $p$ be a production plan $(s_0,t_0, \alpha_0), (s_1, t_1, \alpha_1), \ldots, (s_k, t_k, \alpha_k)$, let $P$ be the corresponding $s$-$t$ path in $D$, and $q_P = (q_P^{\mathrm{l}}, q_P^{\mathrm{m}},n_P^{\mathrm{s}}, n_P^{\mathrm{l}},n_P^{\mathrm{d}},\tilde{c}_P)$ be its resource.
Propositions~\ref{prop:SlencodesConstraint} and~\ref{prop:SmEncodesConstraint} that $p$ satisfies constraints~\ref{rule:min_layer_duration} and~\ref{rule:min_mode_duration} respectively if and only if $\rmeas^{\mathrm{l}}(q_P^{\mathrm{l}}) = 0$ and $\rmeas^{\mathrm{m}}(q_P^{\mathrm{m}}) = 0$.
Furthermore, the definition of arcs resources ensures that $n_P^{\mathrm{s}}$, $n_P^{\mathrm{l}}$, and $n_P^{\mathrm{d}}$ respectively correspond to the number of startups, of layer change, and of deep transition in $p$.
The definition of $\rmeas$ then enables to conclude that $\rmeas(q_P) = 0$ if
and only if constraints~\ref{rule:min_layer_duration}--\ref{rule:max_nb_of_deep_transitions} are satisfied. 
Finally, the definition of arcs resources $c_a$ ensures that $\rcost(q_P) = \sum_{i = 1, \dots, k} \lambda(\alpha_i, t_i)$, which concludes the proof.
\end{proof}


\section{Numerical results} 
\label{sub:numericcal_results_single}

\Cref{tab:algocomparision} provides numerical results obtained on a dataset of 97
independent thermal units of EDF. The columns of \Cref{tab:algocomparision} correspond to different algorithms.
The first three rows describe basic characteristics of the algorithms. They
are followed by four lines of algorithm performance statistics: the number of
path concatenation operations $\oplus$ done along the algorithm,
the number of partial paths discarded by the dominance and the lower bound
tests, and the number of $o$-$d$ paths returned by the algorithm. Finally, the
last lines give the total computation time, the proportion needed to compute
bounds, and the comparison to the standard
label setting algorithm \texttt{V0}, which does not
require lower bounds and use only dominance to discard paths. The smaller the ratio is, the better the algorithm is. The label
correcting algorithms correspond to the algorithm of Section
\ref{sec:ordered_monoid_framework}, and label setting
algorithms are described in \Cref{rem:LabelSetting}. Only algorithms
\texttt{V1} and \texttt{V3} use bounds and bounds computing time is included in
their respective solving time.


The label setting algorithms are distinguished by whether or not they use the lower bound
test to discard partial paths. It turns out that the usage of bounds lead to a
minor speed up of label setting algorithms.
This conclusion changes dramatically when it comes to label correcting
algorithms. Indeed, algorithm \texttt{V2}, which does not use bounds, performs
poorly, whereas algorithm \texttt{V3} is 25 times faster than
the usual label setting algorithms. The reason for this good performance is
that bounds enable to strongly reduce the number of partial paths explored, as
it can be observed on Figure~\ref{fig:search_tree}, which depicts the arcs
explored respectively by algorithms \texttt{V0} and \texttt{V3}. The use of
bounds in the key and in the test are both crucial to obtain this good
performance. Indeed, using bounds only in the test or only in the key leads to
algorithms that are respectively 45 and 20850 times slower than algorithm
\texttt{V3}. Extensive numerical results with these variants of the algorithms
are available in appendix.


\begin{table}
\begin{center}
\begin{tabular}{r|cc|cc}
  & \texttt{V0} & \texttt{V1} & \texttt{V2} & \texttt{V3} \\
  \hline
  type & label setting & label setting & label correcting & label correcting \\
  \textsf{key} & early date & early date & $\rcost(q_p)$ & $\rcost(q_p \oplus b_v)$ \\
  \textsf{test} & Dom & Dom \& Low & Dom & Dom \& Low\\
  \hline
  iterations (k) & 326 & 281 & 150541 & 21 \\
  discarded Dom (k)  & 188 & 136 & 124083 & 8 \\
  discarded Low (k) & - & 31 & - & 8 \\
  \# od paths & 322 & 17 & 12342 & 1 \\
  \hline
  solving time (ms) & 40.3 & 36.8 & 96.6e3 & 1.8 \\
  bound computation ($\%$) & 0 & 0.4 & 0 & 8.3 \\
  solving time ratio & 1.00x & 0.91x & 2398x & 0.04x

\end{tabular}
%
\end{center}
\caption{An overview of implemented enumeration algorithms with different
  selection strategies, keys and discarding tests in use.}
\label{tab:algocomparision}
\end{table}

\begin{figure}[htb]
    \centering
    \begin{minipage}[t]{0.48\linewidth}
        \centering
        \includegraphics[width=\linewidth]{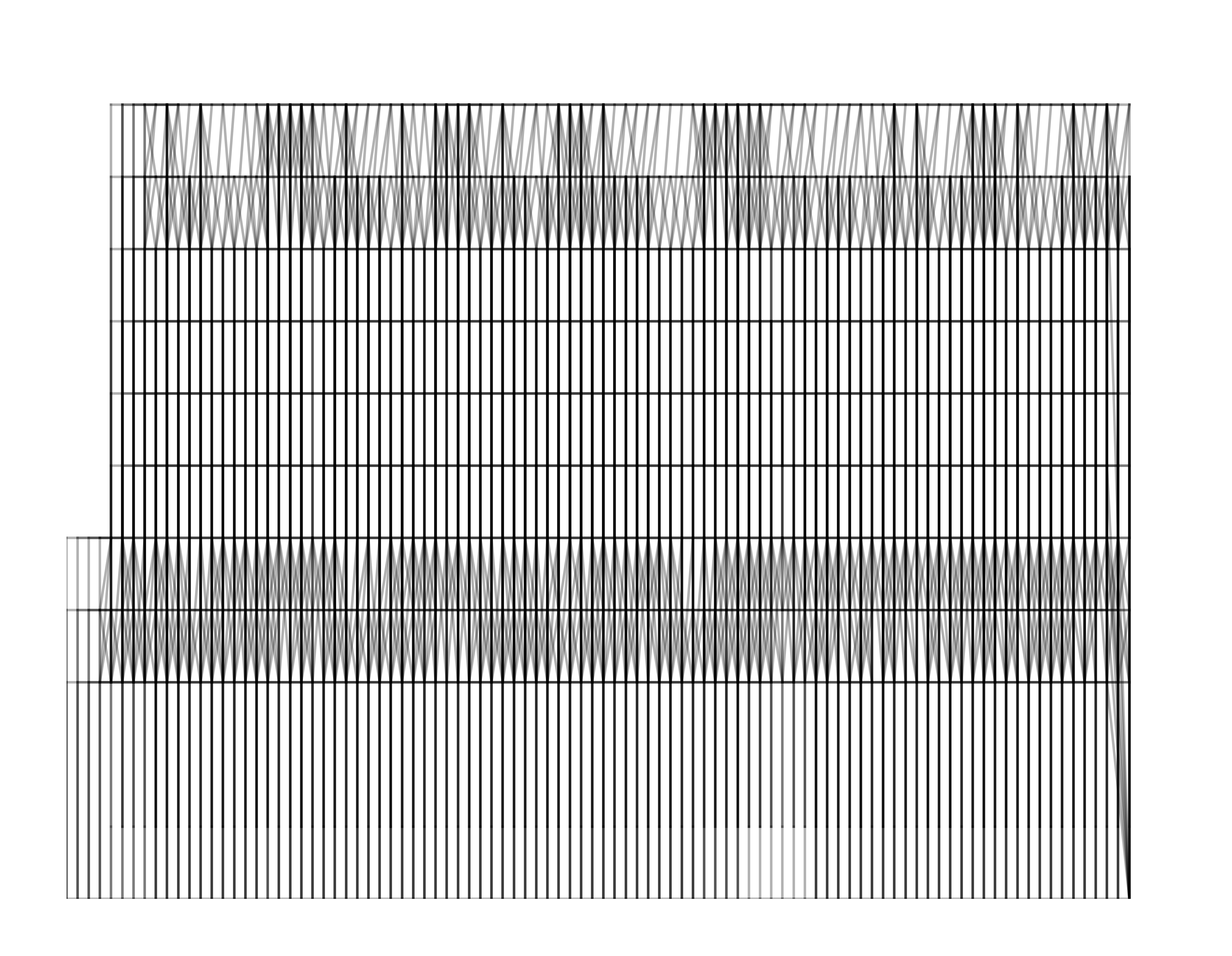}
        \caption*{a) Label setting algorithm \texttt{V0}.}
    \end{minipage}
    \hfill
    \begin{minipage}[t]{0.48\linewidth}
        \centering
        \includegraphics[width=\linewidth]{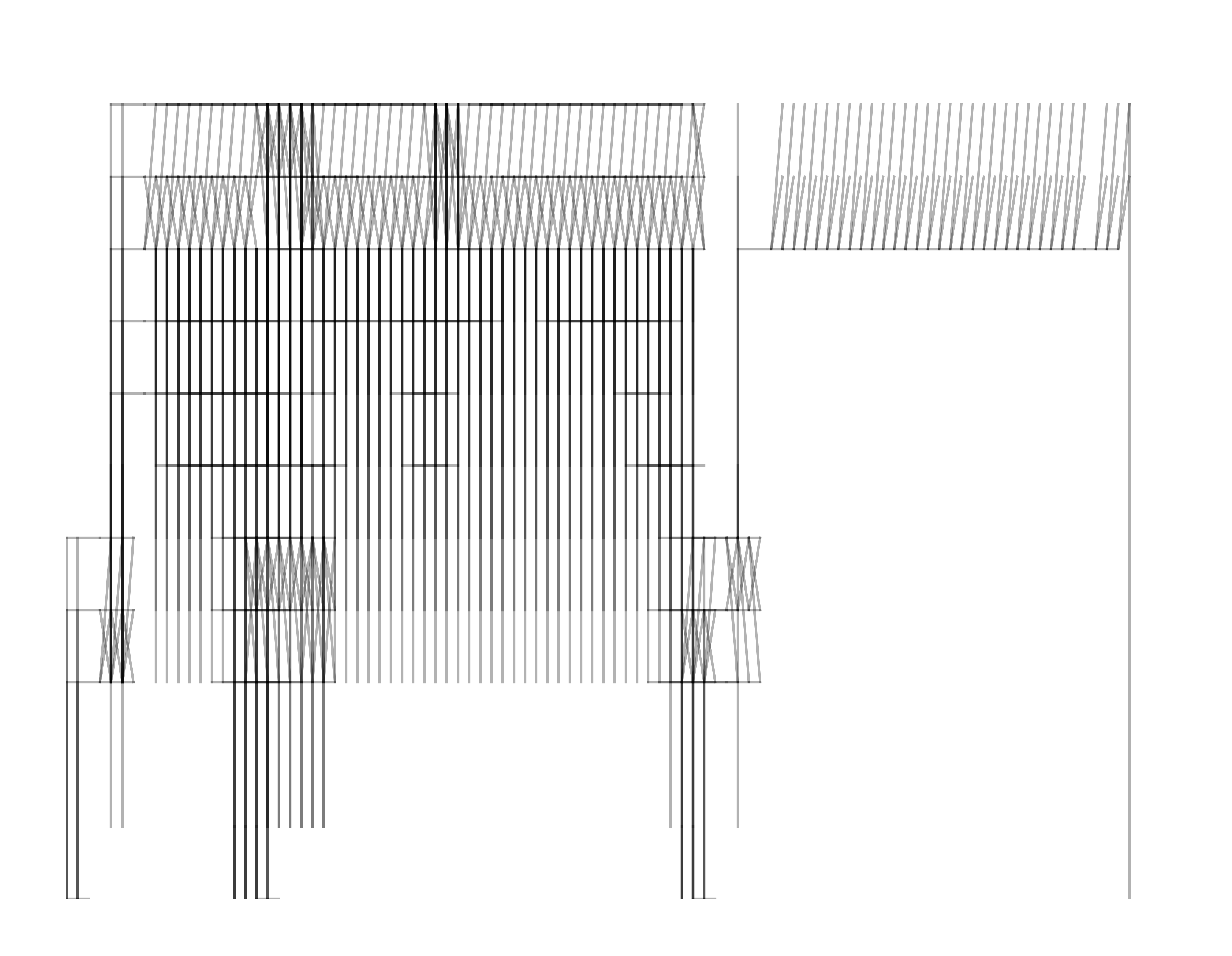}
        \caption*{b) Label correction algorithm \texttt{V3}.}
    \end{minipage}
  \caption{Visualization of the explored search space of digraph $D$
    by the corresponding enumeration algorithm.
    The y-axis indicated power levels, and the x-axis the time horizon.
  Each gray line represents a partial path during
  the solving process. Black lines indicate that multiple paths are using the same edge.}
\label{fig:search_tree}
\end{figure}









\subsection*{Acknowledgments}

We thank the ``Programme Gaspard Monge pour l'optimisation, la recherche opérationnelle et leurs intéractions avec les sciences des données'' (PGMO) for its support to this work.

\bibliographystyle{plainnat}
\bibliography{biblioPGMO}{}

\end{document}